\numberwithin{equation}{section}
\theoremstyle{plain}
\newtheorem{thm}{Theorem}[section]
\newtheorem{lem}[thm]{Lemma}
\theoremstyle{definition}
\newtheorem*{rem}{Remark}
\newcommand{\Z}{\mathbb{Z}}
\newcommand{\F}{\mathbb{F}}
\newcommand{\bs}{\boldsymbol}
\newcommand{\aut}{\operatorname{Aut}}
\newcommand{\GL}{\operatorname{GL}}
\newcommand{\sym}{\operatorname{Sym}}
\begin{document}
\title{On a huge family of non-schurian Schur rings}

\author{Akihide Hanaki\thanks{Supported by JSPS KAKENHI Grant Number JP17K05165.}\\
\small Faculty of Science, Shinshu University,\\[-0.8ex]
\small Matsumoto, 390-8621, Japan\\[-0.8ex] 
\small\tt hanaki@shinshu-u.ac.jp\\
\and
Takuto Hirai \\
\small Department of Science,\\[-0.8ex]
\small Graduate School of Science and Technology,\\[-0.8ex]
\small Shinshu University,\\[-0.8ex]
\small Matsumoto, 390-8621, Japan\\[-0.8ex]
\small\tt takuto.hirai2021@gmail.com\\
\and
Ilia Ponomarenko \\
\small St.~Petersburg Department of the Steklov Mathematical Institute,\\[-0.8ex]
\small St.~Petersburg, Russia\\[-0.8ex]
\small\tt inp@pdmi.ras.ru}


\date{}

\maketitle

\begin{abstract}
  In his famous monograph on permutation groups, H.~Wielandt gives an example of a Schur ring
  over an elementary abelian group of order $p^2$ ($p>3$ is a prime),
  which is non-schurian, that is, it is the transitivity module of no  permutation group.
  Generalizing this example, we construct a huge family of non-schurian Schur rings
  over elementary abelian groups of even rank.
\end{abstract}

\section{Introduction}
A {\it Schur ring} over a finite group $H$  is a subring of a group algebra of $H$,
which has a distinguished linear basis corresponding to a certain partition of $H$.
A typical example of a Schur ring is obtained when $H$ is a regular subgroup of
a group $G\le\sym(H)$ and the partition is formed by the orbits of the stabilizer of $1_H$ in $G$.
These rings were introduced by I.~Schur (1933) and named after him {\it schurian}.
However there are non-schurian Schur rings.
Apparently, the first such example was given by H.~Wielandt \cite[Theorem 26.4]{MR0183775}
for elementary abelian groups $H$ of rank $2$; some other examples can be found in \cite{MR1850191,MR3225122}.
A goal of the present paper is to generalize Wielandt's example by constructing non-schurian Schur rings
over all elementary abelian groups of even rank except for the order $2^2$, $3^2$, and $2^4$.

Let $\F=\F_q$ be a Galois field of order $q$. 
We denote by  $\mathcal{L}$ the set of all lines, $1$-dimensional subspaces,
in the $2$-dimensional $\F$-vector space $V=\F^2$.
Then $\mathcal{L}=\{L_\alpha\mid \alpha\in \F\cup\{\infty\}\}$, where
\begin{eqnarray*}
  L_\alpha &=& \{(x,\alpha x)\mid x\in \F\}\quad  (\alpha\in \F),\\
  L_\infty &=& \{(0,x)\mid x\in \F\}.
\end{eqnarray*}

Let $\Pi=\{P_1,\dots,P_r\}$ be a partition of $\mathcal{L}$ into a disjoint union of $r\ge 1$ subsets $P_i$, $i=1,\ldots,r$.
This partition induces a partition $\widetilde{\Pi}=\{\{(0,0)\}, \widetilde{P}_1,\dots,\widetilde{P}_r\}$
of the vector space $V$, where $\widetilde{P}_i=\bigcup_{L_\alpha\in P_i}L_\alpha^\sharp$ and
$L_\alpha^\sharp=L_\alpha\setminus\{(0,0)\}$.
It is easy to see that the partition $\widetilde{\Pi}$ defines
a Schur ring $\mathcal{S}(\Pi)$ over the additive group $V^+$
which is an elementary abelian group of order $q^2$ (Theorem \ref{thm:Schur}).
We set 
\begin{equation}\label{def_of_M}
\mathcal{M}(\Pi)=\{\alpha\in \F\cup\{\infty\}\mid \{L_\alpha\}\in \Pi\}.
\end{equation}

Our main theorem shows that,
if $\{\infty, 0, 1\}\subset \mathcal{M}(\Pi)$ and $\mathcal{M}(\Pi)\setminus\{\infty\}$
is not a subfield of $\F$,
then the Schur ring $\mathcal{S}(\Pi)$ is not schurian (Theorem \ref{thm:nonschurian}).
The above mentioned Wielandt's example is just the case when $q\ge 5$ is a prime number and
$\Pi=\{\{L_\infty\}, \{L_0\}, \{L_1\}, \mathcal{L}\setminus\{L_\infty,L_0,L_1\}\}$.

\section{Proofs of the main results}

\subsection{Schur rings}
Let $H$ be a finite group, $\Z H$ the group ring of $H$ over the ring of rational integers $\Z$,
and $\Pi$ a partition  of $H$. Set
$$
\mathcal{A}=\bigoplus_{X\in\Pi} \Z \underline{X}\subset \Z H,
$$
where for any set $X\subset H$, we put $\underline{X}=\sum_{x\in X}x\in \Z H$.
Following \cite{MR0183775}, we say that $\mathcal{A}$ is a \emph{Schur ring}
over $H$ if the following conditions are satisfied:
\begin{enumerate}
  \item[(S1)] $\{1_H\}\in \Pi$,
  \item[(S2)] $\{x^{-1}\mid x\in X\}\in \Pi$ for all $X\in\Pi$, and
  \item[(S3)] $\mathcal{A}$ is a subring of $\Z H$.
\end{enumerate}

A typical example of a Schur ring is obtained as follows.
Let $G\le\sym(H)$ be a (transitive) permutation group containing $H$ as a regular subgroup,
and let $G_1$ be the stabilizer of the point $1_H$ in $G$.
Then the partition of $H$ into the  $G_1$-orbits defines a Schur ring over $H$ \cite[Theorem 24.1]{MR0183775}.
Any  Schur ring obtained in this way is said to be \emph{schurian}.

For more details on Schur rings the reader is referred to \cite{MR2535399}.

\subsection{Construction}
Keeping the notations from Introduction,
let $q$ be a power of prime and $H$ an elementary abelian group of order $q^2$.
To avoid misunderstanding, we use multiplicative notation for $H$ and fix an isomorphism $\rho:V^+\to H$.
Let $\Pi=\{P_1,\dots,P_r\}$ be a partition of $\mathcal{L}$,
and let $\widetilde{P}_1,\dots,\widetilde{P}_r$ be as in Introduction. Set
$$
\underline{\widetilde{P}_i}=\sum_{L_\alpha\in P_i}\sum_{\bs{x}\in L_\alpha^\sharp}\rho(\bs{x})\in \Z H,\quad i=1,\ldots,r,
$$
and
$$
\mathcal{S}(\Pi)=\Z\rho(0,0)\oplus \left(\bigoplus_{i=1}^r \Z \underline{\widetilde{P}_i}\right) \subset \Z H.
$$

\begin{thm}\label{thm:Schur}
  Let $\Pi$ be an arbitrary partition of $\mathcal{L}$.
  Then $\mathcal{S}(\Pi)$ is a Schur ring over the group $H$.
\end{thm}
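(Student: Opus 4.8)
The plan is to verify the three defining properties (S1)--(S3) for the partition $\widetilde{\Pi}=\{\{\rho(0,0)\},\widetilde{P}_1,\dots,\widetilde{P}_r\}$ of $H$. Properties (S1) and (S2) are immediate: since $\rho$ is a group isomorphism, $\rho(0,0)=1_H$, which gives (S1); and since every line $L_\alpha$ is a subspace, it is closed under $\bs{x}\mapsto -\bs{x}$, so each $L_\alpha^\sharp$ and hence each $\widetilde{P}_i$ is closed under the inversion $\rho(\bs{x})\mapsto\rho(\bs{x})^{-1}=\rho(-\bs{x})$, which gives (S2). All the work lies in (S3). For it I would use the standard criterion that $\mathcal{A}=\bigoplus_{X\in\Pi}\Z\underline{X}$ is a subring of $\Z H$ if and only if, for every ordered triple of parts $X,Y,Z$, the intersection number $|\{(x,y)\in X\times Y:xy=z\}|$ is independent of the choice of $z\in Z$; in that case these numbers are the (nonnegative integer) structure constants. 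So it suffices to check this constancy for the parts of $\widetilde{\Pi}$.

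The first step is to compute the atomic products $\underline{L_\alpha^\sharp}\cdot\underline{L_\beta^\sharp}$ of single punctured lines, transported to $H$ via $\rho$ so that addition in $V$ corresponds to multiplication in $H$. Two cases arise from elementary linear algebra in $V=\F^2$. If $\alpha\neq\beta$, then $V=L_\alpha\oplus L_\beta$, so every $\bs{z}\in V$ has a unique decomposition $\bs{z}=\bs{x}+\bs{y}$ with $\bs{x}\in L_\alpha$, $\bs{y}\in L_\beta$; counting those with both summands nonzero gives
\[
  \underline{L_\alpha^\sharp}\cdot\underline{L_\beta^\sharp}=\sum_{\gamma\neq\alpha,\beta}\underline{L_\gamma^\sharp}.
\]
If $\alpha=\beta$, then $L_\alpha$ is closed under addition, and a one-parameter count along the line gives
\[
  \bigl(\underline{L_\alpha^\sharp}\bigr)^2=(q-1)\,\rho(0,0)+(q-2)\,\underline{L_\alpha^\sharp}.
\]
(The scalar action of $\F^\times$, which fixes $0$ and preserves each line, already shows these numbers are constant along each punctured line, but they can also be read off directly.)

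It remains to expand $\underline{\widetilde{P}_i}\cdot\underline{\widetilde{P}_j}=\sum_{L_\alpha\in P_i}\sum_{L_\beta\in P_j}\underline{L_\alpha^\sharp}\cdot\underline{L_\beta^\sharp}$ using these formulas and to verify that the coefficient of each $\underline{L_\gamma^\sharp}$ depends only on the part $P_k$ containing $L_\gamma$, not on $L_\gamma$ itself; this is the heart of the matter and the step I expect to be the main obstacle. When $i\neq j$ the parts $P_i,P_j$ are disjoint, so only the first atomic formula contributes, and the coefficient of $\underline{L_\gamma^\sharp}$ is the number of pairs $(\alpha,\beta)$ with $L_\alpha\in P_i$, $L_\beta\in P_j$ and $\gamma\notin\{\alpha,\beta\}$; a short inclusion--exclusion evaluates this as $|P_i|\,|P_j|-|P_j|\,[L_\gamma\in P_i]-|P_i|\,[L_\gamma\in P_j]$. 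When $i=j$ I would first extract the diagonal $\alpha=\beta$ terms via the square formula, contributing $(q-1)|P_i|\,\rho(0,0)+(q-2)\,\underline{\widetilde{P}_i}$, and apply the same count to the remaining off-diagonal pairs. In every case the only dependence on $\gamma$ is through the indicators $[L_\gamma\in P_i]$ and $[L_\gamma\in P_j]$, and because $P_i$ and $P_j$ are parts of a partition, hence either equal or disjoint, these indicators depend only on the part $P_k$ containing $L_\gamma$. Thus each coefficient is constant on the parts of $\widetilde{\Pi}$, which yields (S3) and completes the proof.
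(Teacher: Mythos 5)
Your proof is correct, and its atomic computations coincide with the paper's: both arguments rest on the two facts that $V=L_\alpha\oplus L_\beta$ for $\alpha\neq\beta$ (so sums of elements from two distinct lines sweep out $V$ uniformly) and that each line is closed under addition. Where you genuinely diverge is in how closure under multiplication is organized. The paper first changes basis: it replaces each $\underline{\widetilde{P}_i}$ by the full-line sum $Q_i=\underline{\widetilde{P}_i}+|P_i|\,\rho(0,0)$ and notes that $\mathcal{S}(\Pi)=\Z\rho(0,0)\oplus\bigl(\bigoplus_{i=1}^r\Z Q_i\bigr)$; in this basis the products collapse to $Q_iQ_j=|P_i|\,|P_j|\sum_{\bs{z}\in V}\rho(\bs{z})$ for $i\neq j$ and $Q_i^2=qQ_i+|P_i|(|P_i|-1)\sum_{\bs{z}\in V}\rho(\bs{z})$, both visibly in $\mathcal{S}(\Pi)$, so no coefficient bookkeeping is needed and the identity of the part containing a given line never enters. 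You instead keep the punctured-line basis and verify the intersection-number criterion directly, using $\underline{L_\alpha^\sharp}\,\underline{L_\beta^\sharp}=\sum_{\gamma\neq\alpha,\beta}\underline{L_\gamma^\sharp}$ and $\bigl(\underline{L_\alpha^\sharp}\bigr)^2=(q-1)\rho(0,0)+(q-2)\underline{L_\alpha^\sharp}$ (both of which are right) and then checking by inclusion--exclusion that the coefficient of $\underline{L_\gamma^\sharp}$ depends on $\gamma$ only through the indicators $[L_\gamma\in P_i]$, $[L_\gamma\in P_j]$. Your route is longer but buys the explicit structure constants, e.g.\ $|P_i|\,|P_j|-|P_j|\,[L_\gamma\in P_i]-|P_i|\,[L_\gamma\in P_j]$ for $i\neq j$, and for $i=j$ the off-diagonal coefficient $|P_i|(|P_i|-1)-2(|P_i|-1)[L_\gamma\in P_i]$ (the events $\gamma=\alpha$ and $\gamma=\beta$ cannot occur simultaneously since $\alpha\neq\beta$), which is the one formula you left implicit and should spell out; the paper's change-of-basis trick buys brevity by absorbing into $Q_i$ exactly the identity terms and indicator corrections your computation has to track.
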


\begin{proof}
  The conditions (S1) and (S2) are clear by definition.
  It is easily seen that if 
  $$
  Q_i=\sum_{L_\alpha\in P_i}\sum_{\bs{x}\in L_\alpha}\rho(\bs{x})\in \Z H,\quad i=1,\ldots,r,
  $$
  then 
  \begin{equation}\label{Schur.gen}
    \mathcal{S}(\Pi)=\Z\rho(0,0)\oplus\left(\bigoplus_{i=1}^r \Z Q_i\right).
  \end{equation}
  Since $\mathcal{S}(\Pi)$ is closed with respect to addition, it suffices to show
  that $\mathcal{S}(\Pi)$ written in form (\ref{Schur.gen})
  is closed with respect to multiplication.
  Let $\alpha$ and $\beta$ be distinct elements of $\F\cup\{\infty\}$.
  Then
  $$
  \left(\sum_{\bs{x}\in L_\alpha}\rho(\bs{x})\right)\left(\sum_{\bs{y}\in L_\beta}\rho(\bs{y})\right)
  =\sum_{\bs{x}\in L_\alpha}\sum_{\bs{y}\in L_\beta}\rho(\bs{x}+\bs{y})=\sum_{\bs{z}\in V}\bs{z}
  \in \mathcal{S}(\Pi),
  $$
  because $L_\alpha$ and $L_\beta$ are distinct $1$-dimensional subspaces of the $2$-dimensional vector space $V$.
  Also we have
  $$\left(\sum_{\bs{x}\in L_\alpha}\rho(\bs{x})\right)^2=q\left(\sum_{\bs{x}\in L_\alpha}\rho(\bs{x})\right).$$
  Thus, if $i\ne j$, then
  $$
  Q_i Q_j=|P_i|\,|P_j| \sum_{\bs{z}\in V}\bs{z}   \in \mathcal{S}(\Pi),
  $$
  and
  $$
  {Q_i}^2=q Q_i + |P_i|(|P_i|-1) \sum_{\bs{z}\in V}\bs{z}   \in \mathcal{S}(\Pi).
  $$
  Thus, $\mathcal{S}(\Pi)$ is a Schur ring over $H$.
\end{proof}

\begin{rem}
  It is well known that every Schur ring defines an association scheme.
  Moreover, let $\Pi$ be the partition of $\mathcal{L}$ into singletons  $\{L_\alpha\}$, $\alpha\in \F\cup\{\infty\}$.
  Then the association scheme corresponding to the Schur ring $\mathcal{S}(\Pi)$ is
  a cyclotomic scheme over $\F_{q^2}$.
  Such a scheme is known to be amorphic \cite{MR2557883}.
  Thus  Theorem \ref{thm:Schur} is a direct consequence of this fact.
\end{rem}

\subsection{The main theorem}
We are ready to state the main result of the present paper.

\begin{thm}\label{thm:nonschurian}
  Let $\F$ be a Galois field of order $q$,
  $\mathcal{L}$ the set of all lines in the vector space $V=\F^2$,
  $\Pi$ a partition of $\mathcal{L}$, and $\mathcal{S}(\Pi)$ the Schur ring
  over the elementary abelian group $H\cong V^+$ of order $q^2$.  Suppose that 
  \begin{equation}\label{condition}
    \{\infty, 0, 1\}\subset \mathcal{M}(\Pi)\quad\text{and}\quad \mathcal{M}(\Pi)\setminus\{\infty\}\text{ 
      is not a subfield of }\F,
  \end{equation}
  where $\mathcal{M}(\Pi)$ is defined by \eqref{def_of_M}.
  Then the Schur ring $\mathcal{S}(\Pi)$  is non-schurian.
\end{thm}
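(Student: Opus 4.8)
The plan is to use the standard criterion that detects non-schurity through the automorphism group of the associated Cayley scheme. Let $G=\aut(\mathcal{S}(\Pi))$ be the group of all permutations of $H\cong V^+$ preserving every basic relation of the Schur ring; it contains the regular translation group $H$, and the orbits of the stabilizer $G_0$ of the identity $0=\rho(0,0)$ always refine the partition $\widetilde{\Pi}$. By the usual argument (any permutation group realizing $\mathcal{S}(\Pi)$ embeds in $G$), the ring $\mathcal{S}(\Pi)$ is schurian if and only if the $G_0$-orbits coincide with the basic sets $\widetilde{P}_1,\dots,\widetilde{P}_r$. So I would aim to exhibit a single basic set that $G_0$ fails to act transitively on.

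First I would pin down the shape of $G_0$ using only that $L_\infty,L_0,L_1\in\mathcal{M}(\Pi)$. For a singleton line $\alpha\in\mathcal{M}(\Pi)$ the basic graph $\mathrm{Cay}(V,L_\alpha^\sharp)$ has connected components exactly the cosets of the subgroup $L_\alpha$, so any $g\in G_0$ preserves the $L_\alpha$-coset partition and, fixing $0$, fixes $L_\alpha$ setwise. Applying this to $L_\infty$ (vertical lines) and $L_0$ (horizontal lines) forces $g(x,y)=(\phi(x),\psi(y))$ for permutations $\phi,\psi$ of $\F$ with $\phi(0)=\psi(0)=0$. Feeding in the third line $L_1$ (the partition $y-x=\mathrm{const}$) gives $\psi(x+c)=\phi(x)+\psi(c)$ for all $x,c$, whence $\psi=\phi$ and $\phi$ is additive. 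Thus every element of $G_0$ has the form $g=(\phi,\phi)$ with $\phi$ an additive bijection of $\F$.

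Next I would incorporate the remaining singletons. For $\alpha\in M:=\mathcal{M}(\Pi)\setminus\{\infty\}$, invariance of $L_\alpha$ under $g=(\phi,\phi)$ reads $\phi(\alpha x)=\alpha\phi(x)$ for all $x$. The set $K_\phi=\{\lambda\in\F\mid \phi(\lambda x)=\lambda\phi(x)\ \forall x\}$ contains $0,1$ and is closed under addition and multiplication (a short additive computation), hence is a subfield of $\F$; since it contains $M$, it contains the subfield $K=\langle M\rangle$ generated by $M$, i.e.\ $\phi$ is $K$-linear. The key consequence is that for \emph{every} $\gamma\in K$ the map $g=(\phi,\phi)$ fixes $L_\gamma^\sharp$ setwise, because $\phi(\gamma x)=\gamma\phi(x)$. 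Now the hypothesis that $M$ is not a subfield gives $K\supsetneq M$, so I may choose $\gamma\in K\setminus M$. Then $L_\gamma$ is not a singleton of $\Pi$, so it lies in a block $P_j$ with $|P_j|\ge 2$, and the basic set $\widetilde{P}_j$ properly contains the nonempty, $G_0$-invariant subset $L_\gamma^\sharp$. Hence $G_0$ cannot be transitive on $\widetilde{P}_j$; the $G_0$-orbits strictly refine $\widetilde{\Pi}$, and therefore $\mathcal{S}(\Pi)$ is non-schurian.

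The main obstacle is the rigorous reduction $G_0=\{(\phi,\phi)\mid \phi\ \text{additive}\}$: it requires carefully arguing from the coset-partition structure of the three singleton graphs (rather than assuming $g$ is linear or even maps lines to lines), and then the clean observation that the scalars respected by an additive $\phi$ form a subfield. Once that subfield closure is in hand, the crucial gap $K\setminus M\ne\varnothing$ follows immediately from the assumption, and the splitting of $\widetilde{P}_j$ is automatic; notably this direction does not even use the block constraints coming from the non-singleton classes, since those only further shrink $G_0$.
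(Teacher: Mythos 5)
Your proof is correct, and it takes a genuinely different route from the paper's. The paper assumes schurity via some group $G$, invokes Wielandt's Theorem 26.2 together with the first part of the proof of his Lemma 26.3 to conclude that $H$ is normal in $G$ (this is where the hypothesis $0,1\in\mathcal{M}(\Pi)$ enters, through $H=\rho(L_0)\times\rho(L_1)$), then cites Passman to place the stabilizer $G_1$ inside $\aut(H)\cong\GL(2e,p)$, and finally applies the matrix computation of Lemma \ref{lem01} to show that the set of slopes of $G_1$-invariant lines is closed under addition and multiplication, so that $\mathcal{M}(\Pi)\setminus\{\infty\}$ would be a subfield of $\F$ --- contradiction. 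You bypass the normality and linearity reductions entirely: working with the full automorphism group of the scheme (justified by the standard criterion that $\mathcal{S}(\Pi)$ is schurian iff the orbits of $\aut(\mathcal{S}(\Pi))_0$ are exactly the basic sets), you extract from the component partitions of the three singleton Cayley graphs attached to $L_\infty$, $L_0$, $L_1$ that every $g\in G_0$ has the diagonal form $(\phi,\phi)$ with $\phi$ additive --- a purely combinatorial derivation of $\F_p$-linearity from arbitrary permutations --- and you replace Lemma \ref{lem01} by the observation that the multiplier set $K_\phi=\{\lambda\mid\phi(\lambda x)=\lambda\phi(x)\ \forall x\}$ of an additive bijection is a subfield. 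This is exactly the conceptual core of the paper's lemma: its relation $A\Psi(\alpha)=\Psi(\alpha)A$ says precisely that $\alpha\in K_\phi$, and the centralizer condition cuts out a subfield of $\Psi(\F)\cong\F$. Your endgame also differs: instead of deriving the contradiction from closure of $\mathcal{M}(\Pi)\setminus\{\infty\}$ under the field operations, you pick $\gamma$ in $K\setminus\mathcal{M}(\Pi)$ ($K$ the subfield generated by $\mathcal{M}(\Pi)\setminus\{\infty\}$) and exhibit a basic set $\widetilde{P}_j\supsetneq L_\gamma^\sharp$ on which $G_0$ is visibly intransitive. What each approach buys: the paper's argument is shorter modulo its citations and its Lemma \ref{lem01} is a statement of independent interest about arbitrary $\F_p$-linear maps; yours is self-contained (no normality of $H$, no reduction to $\aut(H)$), proves a stronger structural statement about the stabilizer in the full automorphism group, and localizes the failure of schurity in a concrete split basic set. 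Two small points deserve explicit justification in a final write-up, both routine: the schurity criterion via $\aut(\mathcal{S}(\Pi))_0$-orbits (if some group realizes the ring, it embeds in the automorphism group, and the stabilizer orbits are squeezed between the basic sets and their refinement), and the finite-field fact that a subset containing $0$ and $1$ and closed under addition and multiplication is a subfield, inverses coming for free from finiteness.
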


\begin{rem}
  The first assumption in \eqref{condition} can be replaced by the assumption $|\mathcal{M}(\Pi)|\ge 3$.
  This follows from the fact that the action of $\GL(2,q)$ on $\mathcal{L}$ is $3$-transitive for $q>3$
  \cite[p.~245]{MR1409812}.
  Note that the second assumption in \eqref{condition} is invariant
  with respect to  the action of $\GL(2,q)$ on $\mathcal{L}$
  under the assumption $\{\infty, 0, 1\}\subset \mathcal{M}(\Pi)$.
\end{rem}

To prove Theorem \ref{thm:nonschurian}, let $q=p^e$,
where $p$ is a prime and $e\ge 1$ is an integer.
We need the following auxiliary lemma.

\begin{lem}\label{lem01}
  Let $\sigma$ be an $\F_p$-linear transformation on $V$
  such that the sets $L_0$, $L_1$, and $L_\infty$  are $\sigma$-invariant.
  Let $\alpha,\beta\in \F$ be such that the sets $L_\alpha$ and $L_\beta$ are $\sigma$-invariant.
  Then so are $L_{\alpha+\beta}$ and $L_{\alpha\beta}$.
\end{lem}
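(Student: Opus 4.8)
The plan is to exploit the invariance of the three reference lines $L_0$, $L_1$, $L_\infty$ to pin down the shape of $\sigma$, and then to reformulate invariance of a general line $L_\gamma$ as a single relation for an $\F_p$-linear map on $\F$.

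First I would use the direct sum decomposition $V=L_0\oplus L_\infty$, where $L_0=\{(x,0)\}$ is identified with $\F$ via $x\mapsto(x,0)$ and $L_\infty=\{(0,x)\}$ via $x\mapsto(0,x)$. Since $\sigma$ preserves each summand, these identifications produce $\F_p$-linear maps $f,g\colon\F\to\F$ with $\sigma(x,0)=(f(x),0)$ and $\sigma(0,y)=(0,g(y))$, whence by additivity $\sigma(x,y)=(f(x),g(y))$ for all $(x,y)\in V$. Invariance of the diagonal line $L_1=\{(x,x)\}$ then forces $\sigma(x,x)=(f(x),g(x))\in L_1$ for every $x$, i.e.\ $f=g$; thus $\sigma(x,y)=(f(x),f(y))$.

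With $\sigma$ in this normal form, I would observe that for $\gamma\in\F$ the line $L_\gamma=\{(x,\gamma x)\}$ is $\sigma$-invariant exactly when $\sigma(x,\gamma x)=(f(x),f(\gamma x))\in L_\gamma$ for all $x$, that is, when
$$
f(\gamma x)=\gamma f(x)\quad\text{for all }x\in\F.
$$
Writing $S$ for the set of $\gamma\in\F$ satisfying this relation, the hypotheses say precisely $\alpha,\beta\in S$, and the conclusion becomes the closure of $S$ under addition and multiplication.

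Both closures are then formal. For the sum, additivity of $f$ gives $f((\alpha+\beta)x)=f(\alpha x)+f(\beta x)=\alpha f(x)+\beta f(x)=(\alpha+\beta)f(x)$, so $\alpha+\beta\in S$. For the product, substituting $\beta x$ for $x$ in the relation for $\alpha$ and then invoking the relation for $\beta$ yields $f((\alpha\beta)x)=f(\alpha(\beta x))=\alpha f(\beta x)=\alpha\beta f(x)$, so $\alpha\beta\in S$. This is exactly the assertion that $L_{\alpha+\beta}$ and $L_{\alpha\beta}$ are $\sigma$-invariant. The only genuine content lies in the first two paragraphs: once the invariance of $L_0,L_\infty$ diagonalizes $\sigma$, the invariance of $L_1$ merges its two components, and invariance of $L_\gamma$ is recast as the commutation relation $f(\gamma x)=\gamma f(x)$; after that, closure under $+$ and $\times$ follows from additivity and composition, so I do not anticipate a real obstacle.
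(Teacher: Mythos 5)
Your proof is correct and takes essentially the same route as the paper: your decomposition $V=L_0\oplus L_\infty$ yielding $\sigma(x,y)=(f(x),g(y))$, the use of $L_1$ to force $f=g$, and the reformulation of invariance of $L_\gamma$ as $f(\gamma x)=\gamma f(x)$ are exactly the paper's steps $A=B=C$ and $A\Psi(\gamma)=\Psi(\gamma)A$, merely stated basis-free instead of via matrices over $\F_p$ with respect to the basis $\{1,\zeta,\dots,\zeta^{e-1}\}$. The closing computations for $\alpha+\beta$ and $\alpha\beta$ coincide with the paper's, so the proposal is a streamlined but not substantively different version of the published argument.
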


\begin{proof}
  We fix an element $\zeta\in \F$ such that $\F=\F_p[\zeta]$;
  for example, $\zeta$ is a primitive $(p^e-1)$th root of unity.
  Then the set $\{1,\zeta,\dots,\zeta^{e-1}\}$ is an $\F_p$-basis of $\F$.
  Given $x\in \F$ or $x\in V$, we define the column vector
  $$
  V(x)=[x,\zeta x,\dots,\zeta^{e-1} x]^T.
  $$
  Let $M(e,p)$ be the full matrix algebra of degree $e$ over $\F_p$, and
  let $\Psi:\F\to M(e,p)$ be the regular representation of $\F$ as an $\F_p$-algebra
  with respect to the basis $\{1,\zeta,\dots,\zeta^{e-1}\}$.
  Namely, $ \Psi(\gamma)V(1)=V(\gamma)$ for all $\gamma\in \F$.

  Let $\bs{x}=(1,0)$ and $\bs{y}=(0,1)\in V$. Then
  $$
  \{\bs{x},\zeta\bs{x},\dots,\zeta^{e-1}\bs{x}\},\ 
  \{\bs{y},\zeta\bs{y},\dots,\zeta^{e-1}\bs{y}\},\ 
  \{(\bs{x}+\bs{y}),\zeta(\bs{x}+\bs{y}),\dots,\zeta^{e-1}(\bs{x}+\bs{y})\}
  $$
  are $\F_p$-bases of $L_0$, $L_\infty$, and $L_1$, respectively.
  Since $L_0$, $L_1$, and $L_\infty$  are $\sigma$-invariant,
  there exist matrices $A, B, C\in M(e,p)$ such that
  $$
  \sigma(\bs{s}V(\bs{x}))=\bs{s}AV(\bs{x}),\ 
  \sigma(\bs{s}V(\bs{y}))=\bs{s}BV(\bs{y}),\ 
  \sigma(\bs{s}V(\bs{x}+\bs{y}))=\bs{s}CV(\bs{x}+\bs{y})$$
  for all row vectors $\bs{s}\in (\F_p)^e$.
  Since the set $\{\bs{x},\zeta\bs{x},\dots,\zeta^{e-1}\bs{x},\bs{y},\zeta\bs{y},\dots,\zeta^{e-1}\bs{y}\}$
  is $\F_p$-linearly independent, we have $A=B=C$.

  By hypothesis, $L_\alpha$ is $\sigma$-invariant.
  Therefore by the above argument for $\F_p$-bases
  $\{\bs{x},\zeta\bs{x},\dots,\zeta^{e-1}\bs{x}\}$,
  $\{\alpha\bs{y},\zeta\alpha\bs{y},\dots,\zeta^{e-1}\alpha\bs{y}\}$,  
  $\{(\bs{x}+\alpha\bs{y}),\zeta(\bs{x}+\alpha\bs{y}),\dots,\zeta^{e-1}(\bs{x}+\alpha\bs{y})\}$
  of $L_0$, $L_\infty$, and $L_\alpha$, respectively, we have
  \begin{equation}\label{eq:linmap}
  \sigma(\bs{s}V(\alpha\bs{y}))=\bs{s}AV(\alpha\bs{y})=\bs{s}A\Psi(\alpha)V(\bs{y})
  \end{equation}
  for all $\bs{s}\in (\F_p)^e$. On the other hand,
  $$
  \sigma(\bs{s}V(\alpha\bs{y}))=\sigma(\bs{s}\Psi(\alpha)V(\bs{y}))=\bs{s}\Psi(\alpha)AV(\bs{y}).$$
  Thus, $A\Psi(\alpha)=\Psi(\alpha)A$ and similarly $A\Psi(\beta)=\Psi(\beta)A$.

  The set $\{\bs{x}+\alpha\bs{y}+\beta\bs{y}, \zeta(\bs{x}+\alpha\bs{y}+\beta\bs{y}),
  \dots,\zeta^{e-1}(\bs{x}+\alpha\bs{y}+\beta\bs{y})\}$ is  a basis of $L_{\alpha+\beta}$.
  Since also
  \begin{eqnarray*}
    \sigma(\bs{s}V(\bs{x}+\alpha\bs{y}+\beta\bs{y}))
    &=& \sigma(\bs{s}V(\bs{x}))+\sigma(\bs{s}V(\alpha\bs{y}))+\sigma(\bs{s}V(\beta\bs{y}))\\
    &=& \bs{s}AV(\bs{x})+\bs{s}AV(\alpha\bs{y})+\bs{s}AV(\beta\bs{y})\\
    &=& \bs{s}AV(\bs{x}+\alpha\bs{y}+\beta\bs{y})\in L_{\alpha+\beta}
  \end{eqnarray*}
  by (\ref{eq:linmap}), the set $L_{\alpha+\beta}$ is $\sigma$-invariant.
  
  The set $\{\bs{x}+\alpha\beta\bs{y}, \zeta(\bs{x}+\alpha\beta\bs{y}),
  \dots,\zeta^{e-1}(\bs{x}+\alpha\beta\bs{y})\}$ is a basis of $L_{\alpha\beta}$. Since also
  \begin{eqnarray*}
    \sigma(\bs{s}V(\bs{x}+\alpha\beta\bs{y}))
    &=& \sigma(\bs{s}V(\bs{x}))+\sigma(\bs{s}V(\alpha\beta\bs{y}))\\
    &=& \sigma(\bs{s}V(\bs{x}))+\sigma(\bs{s}\Psi(\alpha)\Psi(\beta)V(\bs{y}))\\
    &=& \bs{s}AV(\bs{x})+\bs{s}\Psi(\alpha)\Psi(\beta)AV(\bs{y})\\
    &=& \bs{s}AV(\bs{x})+\bs{s}A\Psi(\alpha)\Psi(\beta)V(\bs{y})\\
    &=& \bs{s}AV(\bs{x})+\bs{s}AV(\alpha\beta\bs{y})
        = \bs{s}AV(\bs{x}+\alpha\beta\bs{y})\in L_{\alpha\beta},
  \end{eqnarray*} 
  the set  $L_{\alpha\beta}$ is $\sigma$-invariant.
\end{proof}

\begin{proof}[Proof of Theorem {\rm\ref{thm:nonschurian}}]
  Suppose that the Schur ring $\mathcal{S}(\Pi)$ is schurian.
  Then there is a (transitive) permutation group  $G\le\sym(H)$ containing $H=\rho(V^+)$ as a regular subgroup. Moreover,
  $$
  H=\rho(L_0)\times \rho(L_1)\quad\text{and}\quad\underline{\rho(L_0)}, \underline{\rho(L_1)}\in \mathcal{S}(\Pi).
  $$
  Applying \cite[Theorem 26.2]{MR0183775} and the first part of the proof of \cite[Lemma 26.3]{MR0183775},
  we  conclude that $H$ is normal in $G$.
  According to \cite[Theorem 4.2]{MR2963408},
  we may assume that the stabilizer $G_1$ is a subgroup of $\aut(H)\cong \GL(2e,p)$.

  Now let $\alpha,\beta\in \mathcal{M}(\Pi)\setminus\{\infty\}$.
  Then given $\sigma\in G_1$, the sets $L_\alpha$ and $L_\beta$ are obviously $\sigma$-invariant.
  By Lemma \ref{lem01}, this implies that so are the sets $L_{\alpha+\beta}$ and $L_{\alpha\beta}$.
  It follows that 
  $$
  \underline{\rho(L_{\alpha+\beta})}, \underline{\rho(L_{\alpha\beta})}\in \mathcal{S}(\Pi)\quad\text{and}\quad
  \alpha+\beta,\ \alpha\beta\in \mathcal{M}(\Pi).
  $$
  Thus, $\mathcal{M}(\Pi)\setminus\{\infty\}$ must be a subfield of $\F$, which completes the proof.
\end{proof}


\bibliographystyle{amsplain}
\providecommand{\bysame}{\leavevmode\hbox to3em{\hrulefill}\thinspace}
\providecommand{\MR}{\relax\ifhmode\unskip\space\fi MR }
\providecommand{\MRhref}[2]{%
  \href{http://www.ams.org/mathscinet-getitem?mr=#1}{#2}
}
\providecommand{\href}[2]{#2}

\end{document}